%% file: root.tex
\documentclass{article}

\usepackage{mathtools}
\usepackage{amssymb}
\usepackage{amsthm}
\usepackage{mytheorems}
\usepackage{mymacros}

\theoremstyle{plain}
\newtheorem*{TheoremA}{Theorem A}
\newtheorem*{TheoremB}{Theorem B}

\begin{document}

\title{\bf On the Glauberman-Solomon Theorem}

\author{Paul Flavell}

\maketitle

\input{intro}
\input{unique}
\input{csl}
\input{thmB}

\input{gss}
\input{bibliography}

\end{document}

%% file: intro.tex
\section{Introduction} \label{intro}

The purpose of this note is to give a complete and concise proof of the following:

\begin{TheoremA}[Glauberman]
    Let $p$ be an odd prime and $S \not= 1$ a $p$-group.
    Then $S$ possesses a characteristic subgroup $W(S) \not= 1$
    such that whenever $G$ is a $\qd{p}$-free group of
    characteristic $p$ then \[
        W(S) \normal G.
    \]
\end{TheoremA}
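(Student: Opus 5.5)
We take $W(S)$ to be Glauberman's characteristic subgroup $W(S)=Z(J_o(S))$-type construction. More concretely, we use the ZJ-type subgroup from the Glauberman--Solomon approach: $W(S)$ is the last term of a chain built from elementary abelian subgroups of maximal order. Since $p$ is odd and $G$ is $\qd{p}$-free, the key available tool is the Glauberman--Thompson replacement philosophy. Whenever $V$ is an elementary abelian normal $p$-subgroup of $G$ and $A$ is an abelian subgroup of $S$ of large enough order, the action of $A$ on $V$ cannot be quadratic nontrivially. The reason is that quadratic action together with $p$ odd produces a section isomorphic to $\qd{p}$.

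The plan follows the order of the paper's sections. First (\emph{uniqueness}), we establish a uniqueness result. Suppose $G$ has characteristic $p$, $S\in\Syl_p(G)$, and $V$ is an elementary abelian normal $p$-subgroup. If some abelian subgroup $A$ of $S$ satisfies $[V,A,A]=1$ but $[V,A]\neq 1$, then $G$ involves $\qd{p}$. We prove this by the standard Glauberman argument. Pass to $\langle A^G\rangle$ acting on $V$. Pick $B\in A^G$ with $\langle A,B\rangle$ not a $p$-group, and use the oddness of $p$ to obtain an $SL_2(p)$ acting on $V$, which yields $\qd{p}$. Second (\emph{CSL}), we define $W(S)$ and prove the key ``characteristic subgroup lemma''. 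If $N\normal G$ with $O_p(G)$ controlling, and $W(S)\le T$ where $T$ is a Sylow subgroup of a $p$-local, then $W(S)=W(T)$ on the nose. This transfer of $W$ between Sylow-containing subgroups is the ingredient making the induction work. Third, we prove an intermediate Theorem B. It says that for $G$ of characteristic $p$ and $\qd{p}$-free, $W(S)\le O_p(G)$ forces $W(S)\normal G$, or equivalently $W(S)=W(O_p(G)\cdots)$.

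The main argument then runs as follows. Let $G$ be a minimal counterexample and put $Q=O_p(G)$, so $C_G(Q)\le Q$. Consider $V=\Omega_1 Z(Q)$, which is elementary abelian and normal in $G$. Set $\bar G=G/C_G(V)$. We first argue that $W(S)$ centralizes $V$. Otherwise choose an offending subgroup in the chain defining $W(S)$ that acts on $V$ with the minimal (quadratic) action. By the uniqueness result this produces $\qd{p}$ involved in $G$, a contradiction. Hence $W(S)\le C_G(V)$. Next apply induction to $C=C_G(V)$, which is a proper normal subgroup of characteristic $p$, with Sylow subgroup $S\cap C$ containing $W(S)$. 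By the CSL step, $W(S)=W(S\cap C)$. Induction gives $W(S\cap C)\normal C$, hence $W(S)$ is characteristic in a normal subgroup. By a Frattini argument $G=C\,N_G(S\cap C)$, and $N_G(S\cap C)$ normalizes $W(S\cap C)$, so $W(S)\normal G$. The degenerate case $C=G$, where $V$ is central, is handled by passing to $G/V$. Here we use that $W$ behaves well modulo central elementary abelian subgroups, or alternatively replace $V$ by a larger term of the chain, for instance $\Omega_1Z(J(Q))$.

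We expect the main obstacle to be the uniqueness/quadratic step. This is precisely where $p$ odd is used, and the failure of the step for $p=2$ shows there is no cheap route. The first thing to try is the Glauberman--Thompson ``replacement'' trick: replace $A$ by $A^*=C_A(V)\,[V,A]$ to force quadratic action while keeping $|A^*|\ge|A|$, and then use Thompson's $SL_2(p)$ generation lemma for a pair of conjugates of $A^*$.
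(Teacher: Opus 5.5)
Your proposal has genuine gaps, starting with the fact that $W(S)$ is never defined. ``A $ZJ$-type subgroup, the last term of a chain built from elementary abelian subgroups of maximal order'' does not specify a subgroup. So none of the properties you later use can be checked, and several of them have no justification:
\begin{itemize}
\item The ``CSL'' claim $W(S)=W(T)$ for $W(S)\le T$ is not something you can expect. For the subgroup that actually works, $\gss{S}$, one only has $\gss{S}\le\gss{T}$ when $\gss{S}\le T\le S$. This inclusion is already strict for $T$ of order $p^{2}$ in an extraspecial $S$ of order $p^{3}$ and exponent $p$, where $\gss{S}=Z(S)$ but $\gss{T}=T$.
\item ``$W$ behaves well modulo central elementary abelian subgroups'' is asserted without argument.
\end{itemize}
More seriously, your central lemma is false: nontrivial quadratic action does not force $\qd{p}$ to be involved. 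Take $G=S$ extraspecial of order $p^{3}$ and exponent $p$, and let $A\ne V$ be two subgroups of order $p^{2}$. Then $A$ is even elementary abelian of maximal order, $V\normal G$, and $[V,A,A]=[Z(S),A]=1\ne[V,A]$. Yet $G$ is a $\qd{p}$-free group of characteristic $p$. So your derivation that $W(S)$ centralizes $\Omega_{1}(Z(\oo{p}{G}))$ has no valid basis. The sketch ``choose $B\in A^{G}$ with $\langle A,B\rangle$ not a $p$-group and obtain $\spl{2}{p}$'' also hides all the real work.

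The correct form of the lemma is Theorem~B: if $[P,x,x]=1$ then $x\in\oo{p}{G \bmod \cc{G}{P}}$, i.e.\ $x$ acts trivially on every chief factor. Its proof goes through a minimal counterexample. This reduces to a faithful irreducible module on which $x$ satisfies the hypothesis of Theorem~\ref{unique.1}. Theorem~\ref{csl.2} then gives $\dim V=2$ and $\spl{2}{p}$. Finally, a Frattini argument on a Sylow $2$-subgroup of the preimage of the centre of $G/\cc{G}{V}$ splits off an actual $\qd{p}$.

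Given Theorem~B, the right choice is $W(S)=\gss{S}$, the largest elementary abelian normal subgroup of $S$ on which no element of $S$ acts nontrivially quadratically. The proof then runs as follows:
\begin{itemize}
\item As $\gss{S}$ is abelian and normal in $S\ge\oo{p}{G}$, it acts quadratically on $\oo{p}{G}$. So Theorem~B and characteristic $p$ give $\gss{S}\le\oo{p}{G}$.
\item Set $V=\listgen{\gss{S}^{G}}$, $N=\oo{p}{G \bmod \cc{G}{V}}$ and $S_{0}=S\cap N$. The Frattini argument gives $V\le\gss{S_{0}}$.
\item Theorem~B forces every $x\in S$ acting quadratically on $\gss{S_{0}}$ into $S_{0}$. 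Hence $\gss{S_{0}}\in\GSS{S}$, and $\gss{S}\le V\le\gss{S_{0}}\le\gss{S}$.
\end{itemize}
This last step is exactly what your induction through $C=\cc{G}{\Omega_{1}(Z(\oo{p}{G}))}$ cannot supply. Nothing in your setup forces an element of $S$ acting quadratically on $W(S\cap C)$ to lie in $C$. At best Theorem~B puts it in $\oo{p}{G \bmod C}$, which can be strictly larger than $C$. That is why the paper builds $N$ from $V=\listgen{\gss{S}^{G}}$ itself, and why no induction on $G$ is needed at this stage.
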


\noindent We remind the reader that:
\begin{itemize}
    \item   $\qd{p}$ is the semidirect product of $\spl{2}{p}$
            with its natural module.

    \item   $G$ is $\qd{p}$-free means it is not possible to find
            $K \normal H \leq G$ with $H/K \isom \qd{p}$.

    \item   $G$ has characteristic $p$ means $\cc{G}{\oo{p}{G}} \leq \oo{p}{G}$.
\end{itemize}

Glauberman's first proof of Theorem~A is in fact his $ZJ$-Theorem \cite{GG1}.
It is one of the foundational results in modern finite group theory.
There are numerous applications, particularly to the classification
of the finite simple groups.

The proof of the $ZJ$-Theorem involves
highly nontrivial commutator arguments. It is therefore natural to
seek alternative and simpler proofs.
Glauberman's $K^{\infty}$-Theorem \cite{GG2} gives a second proof,
followed by Puig's $L$-Theorem \cite{BG}.
Stellmacher gives two different approaches \cite{KS}, \cite{S1}.
Glauberman and Solomon give a remarkably simple proof in \cite{GS}.
A variation of their proof appears in \cite{FS} and it is that proof
that is presented here.

The key concept in all proofs of Theorem~A is that of {\em quadratic action}.
The action of an element $x$ on a $p$-group $P$ is said to be {\em quadratic}
if $[P,x,x] = 1$. If $P$ is elementary abelian then we may regard $P$ as
an $\gf{p}$-vectorspace, use additive notation and let $\BAR{x}$ denote the
endomorphism of $P$ corresponding to $x$.
Then $[P,x,x] = 1$ implies that $(\BAR{x}-1)^{2} = 0$.
Thus $\BAR{x}$ has quadratic minimal polynomial or $\BAR{x} = 1$.

The $\qd{p}$-free hypothesis is exploited by means of the following result,
which extends earlier work of Gorenstein-Walter \cite[section 4]{GW}.

\begin{TheoremB}[Glauberman \cite{GG1}]
    Let $p$ be an odd prime and $G$ a $\qd{p}$-free group.
    Suppose that $P \normal G$ is a $p$-group and $x \in G$ satisfies \[
        [P,x,x] = 1.
    \]
    Then $x$ acts trivially on every $G$-chief factor of $P$ and \[
        x \in \oo{p}{G \bmod \cc{G}{P}}.
    \]
\end{TheoremB}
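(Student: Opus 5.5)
We prove Theorem~B by reducing to a single chief factor and then analysing the linear group generated by the conjugates of $x$, using the classical fact that $\spl{2}{p}$-type subgroups generated by quadratic elements are what occur.

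\textbf{Reduction to a chief factor.} Let $V = P_1/P_2$ be a $G$-chief factor of $P$, with $P_2 < P_1$ both normal in $G$ and contained in $P$. Then $V$ is an elementary abelian $p$-group and an irreducible $\gf{p}G$-module. The hypothesis $[P,x,x]=1$ passes to sections, so $[V,x,x]=0$. It suffices to show that $x$ centralizes $V$. Indeed, suppose $x$ centralizes every chief factor of a chief series through $P$. A standard lemma then shows that the stabilizer of such a series acts on $P$ as a $p$-group modulo $\cc{G}{P}$. Hence $x$ lies in the normal subgroup $C = \bigcap \cc{G}{\text{factor}}$, and $C\cc{G}{P}/\cc{G}{P}$ is a normal $p$-subgroup of $G/\cc{G}{P}$. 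This gives $x \in \oo{p}{G \bmod \cc{G}{P}}$. So replace $G$ by $\overline{G} = G/\cc{G}{V}$, which acts faithfully and irreducibly on $V$. We have $\oo{p}{\overline{G}}=1$, since a normal $p$-subgroup fixes a nonzero vector and hence, by irreducibility, all of $V$. The goal becomes: a quadratic element $\bar x$ in such a group is trivial.

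\textbf{Core argument.} Suppose $\bar x \neq 1$. Since $\oo{p}{\overline{G}}=1$, the Baer--Suzuki theorem gives some conjugate $y$ of $\bar x$ such that $\langle \bar x, y\rangle$ is not a $p$-group. Both $\bar x$ and $y$ act quadratically on $V$. We then use the Thompson-type analysis of a group $L$ generated by two quadratic elements of order $p$ (take $\bar x$ of order $p$ after powering; if $\bar x$ is a $p'$-element, $(\bar x-1)^2=0$ forces $\bar x=1$). Let $D = \langle \bar x,y\rangle$. Decompose $V$ by the subspaces $[V,\bar x]$ and $[V,y]$. Quadratic action gives $[V,\bar x]\le \cc{V}{\bar x}$. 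A computation on $W=[V,\bar x]+[V,y]$ then shows that $D$ acts on a suitable section as $\spl{2}{p}$ on sums of natural modules, with $D/\cc{D}{W}$ isomorphic to $\spl{2}{p}$ modulo center, and this uses $p$ odd. Concretely, pick $v$ with $[v,\bar x,y]\notin \cc{V}{\bar x}$, or use that the two nilpotent operators $a=\bar x-1$ and $b=y-1$ satisfy $a^2=b^2=0$. The element $ab+ba$ commutes with $a$ and $b$, and on its generalized eigenspaces $a$ and $b$ generate a copy of $M_2(\gf{p^k})$-type, giving $\spl{2}{q}$ subquotients. Non-$p$-ness of $D$ forces a nonzero eigenvalue, yielding $H=D$ acting on a $2$-dimensional natural module $U$ over $\gf{p}$ (after restricting to a $\gf{p}$-form), with $H$ inducing $\spl{2}{p}$ on $U$.

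\textbf{Producing $\qd{p}$ and the main obstacle.} Having $H\le \overline{G}$ with a section $U$ of $V$ on which a subquotient of $H$ acts as $\spl{2}{p}$ on its natural module, we lift back to $G$. Take the preimage of $H$ together with the corresponding sections $P_1 \ge A \ge B \ge P_2$ affording $U$. The semidirect product $U\rtimes \spl{2}{p}$ then appears as a section $K\normal H^* \le G$. The main obstacle is precisely this step: ensuring we get $\spl{2}{p}$ over the prime field, not $\spl{2}{q}$ or $\psl{2}{p}$, and realizing the extension as a genuine section of $G$. For the first issue, I would use that eigenvalues of $ab$ lying in $\gf{p^k}$ still give $\spl{2}{p^k}\supseteq \spl{2}{p}$ with a natural module containing a $\gf{p}$-natural submodule for $\spl{2}{p}$, so that $\qd{p}$ is still involved. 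For the second issue, I would pass to the image of $\langle A, H^*\rangle$ in the appropriate quotient, where the extension need not split a priori. Here I would first try a Frattini/Gaschütz argument with $p$ odd to obtain a complement, or else choose $H^*$ minimal so that its $p$-radical acts as $U$. This gives $\qd{p}$ as a section, a contradiction.
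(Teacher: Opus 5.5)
\textbf{There is a genuine gap in the core argument.} Your reduction to a chief factor and the Baer--Suzuki step are fine. So is the observation that if $c=ab+ba$ (which commutes with $a$ and $b$) were nilpotent then $D=\langle \bar x,y\rangle$ would be a $p$-group. The gap is the next step, from a nonzero eigenvalue $\lambda$ of $c$ to the claim that $D$ induces $\spl{2}{p}$ on a $2$-dimensional $\gf{p}$-section $U$.

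Work on $\ker f(c)$, where $f$ is the minimal polynomial of $\lambda$ over $\gf{p}$ (not on the generalized eigenspace, where $c$ need not be scalar). There $D$ acts through $L=\langle u_{1},u_{2}\rangle$ with $u_{1}=\left(\begin{smallmatrix}1&1\\0&1\end{smallmatrix}\right)$ and $u_{2}=\left(\begin{smallmatrix}1&0\\ \lambda&1\end{smallmatrix}\right)$, a subgroup of $\spl{2}{q}$ with $q=|\gf{p}(\lambda)|$. Nothing in your argument forces $\lambda\in\gf{p}$, since Baer--Suzuki gives you no control over $y$. If $\lambda\notin\gf{p}$, then $u_{1}u_{2}$ has trace $2+\lambda\notin\gf{p}$. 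Hence the $\gf{p}D$-composition factors of that block have dimension $2[\gf{p}(\lambda):\gf{p}]>2$, and your $U$ does not exist there.

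Your fallback, ``$\spl{2}{p^{k}}\supseteq\spl{2}{p}$'', presupposes $L=\spl{2}{q}$. That is Dickson's theorem on generation of $\spl{2}{q}$. It is unproved here, and it is not literally true: for $p=3$ one can get $L\isom\spl{2}{5}<\spl{2}{9}$. More to the point, removing the dependence on Dickson is exactly what the Robinson-style argument in the paper is for.

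\textbf{The lifting step is also left open.} You flag this yourself: $V\rtimes\bar G$ is in general not a section of $G$, because $\cc{G}{V}$ can be large and not a $p$-group. Gasch\"utz's theorem does not address this. The device that works is the central involution of $\spl{2}{p}$. Let $N$ be the preimage of $\zenter{\bar G}$ and $Z\in\syl{2}{N}$. The Frattini argument gives $G=\nn{G}{Z}\cc{G}{V}$. Since $Z$ induces $-1$ on $V$ and $p$ is odd, $[\nn{G}{Z}\cap V,Z]\le Z\cap V=1$ forces $\nn{G}{Z}\cap V=1$. Then $\nn{G}{Z}V/(\nn{G}{Z}\cap\cc{G}{V})\isom\qd{p}$.

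\textbf{How the paper avoids the field problem.} It takes a minimal counterexample. Minimality gives not only $U=1$ but also the hypothesis that $\bar x\in\oo{p}{\bar H}$ whenever $\bar x\in\bar H<\bar G$. This feeds Theorem~\ref{unique.1}, giving $\bar G=\langle\bar x,\bar x^{z}\rangle$ with $V=\cc{V}{\bar x}\oplus\cc{V}{\bar x^{z}}$. The involution $t$ then produces $H=\langle z,z^{t}\rangle$, which induces $\spl{2}{p}$ on the $\gf{p}$-plane $\langle e,ez\rangle$. The same hypothesis forces $H=\bar G$ and $\dim V=2$, so extension fields never arise.

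You could patch your route the same way, since minimality forces $\langle\bar x,y\rangle=\bar G$. You would still need something like the $t$-trick to get down to $\gf{p}$.
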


The original proof of Theorem~B is also nontrivial.
It rests on a lengthy theorem of Dickson \cite{Gor} on generators for $\spl{2}{p^{n}}$.
Robinson \cite{R} gives a self-contained concise approach.
We present a variation of his argument.

%% file: unique.tex
\section{A uniqueness theorem} \label{unique}
The Baer-Suzuki Theorem,
which asserts that a conjugacy class generates a $p$-group if and only if
any pair of its elements generate a $p$-group,
is often used in the current context.
We require the following refinement, which is a special case of Wielandt's Zipper Lemma \cite{I1}.
The proof follows the Alperin-Lyons proof \cite{AL} of
the Baer-Suzuki Theorem.
All that is required is that if $Q$ is a non normal subgroup of the $p$-group
$P$ then $Q \not= Q^{g} \leq \nn{P}{Q}$ for some $g \in P$.
This follows by considering the conjugation action of $Q$ on the set of conjugates of $Q$.
Note that the Baer-Suzuki Theorem is a trivial corollary.

\begin{Theorem} \label{unique.1}
    Let $x$ be a $p$-element of the group $G$.
    Assume that \[
        \mbox{$x \not\in \oo{p}{G}$ and $x \in \oo{p}{H}$ whenever $x \in H < G$.}
    \]
    \begin{itemize}
        \item[(a)]    The element $x$ is contained in a unique maximal subgroup $M$ of $G$;

        \item[(b)]  $x^{G} \nsubseteq M$ and
                    $G = \listgen{x, x^{z}}$ for all $z \in x^{G} \setminus M$.
    \end{itemize}
\end{Theorem}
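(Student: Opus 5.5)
We follow the Alperin--Lyons argument. Throughout we write $O_p$ for the largest normal $p$-subgroup; the hypothesis says that $x\in O_p(H)$ for every proper subgroup $H$ of $G$ containing $x$, but $x\notin O_p(G)$.

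\textbf{Step 1: $x$ lies in some maximal subgroup, and every such $H$ satisfies $x^G\nsubseteq H$.} Note first that $x\neq 1$, since $1\in O_p(G)$. If $G=\langle x\rangle$ then $G$ is a $p$-group, so $x\in O_p(G)$, a contradiction. Hence $\langle x\rangle<G$, and $x$ lies in some maximal subgroup. Now let $H$ be a maximal subgroup with $x\in H$, and suppose $x^G\subseteq H$. Then $N=\langle x^G\rangle$ is a normal subgroup of $G$ contained in $H$. Since $N<G$, the hypothesis gives $x\in O_p(N)$. As $O_p(N)$ is characteristic in $N\trianglelefteq G$, we have $O_p(N)\le O_p(G)$, so $x\in O_p(G)$, a contradiction. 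Thus $x^G\nsubseteq H$ for every maximal subgroup $H$ containing $x$.

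\textbf{Step 2: uniqueness of $M$ and generation.} For (a), suppose $x$ lies in two distinct maximal subgroups. Among all pairs of distinct maximal subgroups $H\neq K$ that both contain a conjugate of $x$, choose one with $P=O_p(H)\cap O_p(K)$ as large as possible, together with a conjugate $y\in H\cap K$ of $x$.

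By hypothesis $y\in O_p(H)$ and $y\in O_p(K)$, so $y\in P$. Moreover $P<O_p(H)$: otherwise $O_p(H)\le O_p(K)$, and then $N_G(O_p(H))$ contains both $H$ and $K$ (using that $O_p(H)$ is normal in $H$, and also normal in $K$ by the Sylow/$p$-group structure argument), which forces $O_p(H)\trianglelefteq G$ and contradicts Step 1. The case $P<O_p(K)$ is symmetric.

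Now apply the normalizer-growth principle to $Q=P<O_p(H)$. If $P$ is not normal in $O_p(H)$, we get some $g\in O_p(H)$ with $P\ne P^g\le N_{O_p(H)}(P)$. In either case $N_{O_p(H)}(P)>P$ and $N_{O_p(K)}(P)>P$. Put $L=N_G(P)$. Since $P\neq1$ (it contains $y$), we have $L<G$; otherwise $y\in P\le O_p(G)$.

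Let $L\le R$ with $R$ maximal. Then $y\in R$, and $O_p(R)$ contains $O_p(L)$, which in turn contains both $N_{O_p(H)}(P)$ and $N_{O_p(K)}(P)$. The subgroup $R$ is distinct from at least one of $H,K$; say $R\ne H$. Then
\[
O_p(R)\cap O_p(H)\ \ge\ N_{O_p(H)}(P)\ >\ P,
\]
which contradicts the maximality of $P$. Hence the maximal subgroup containing $x$ is unique; call it $M$. This proves (a).

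For (b), the relation $x^G\nsubseteq M$ is exactly Step 1. Now take $z\in x^G\setminus M$. If $\langle x,x^z\rangle<G$, it lies in some maximal subgroup. That subgroup contains $x$, so by (a) it equals $M$, and then $z\in M$, contrary to the choice of $z$. Therefore $G=\langle x,x^z\rangle$.

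\textbf{Main obstacle.} The delicate point is the extremal argument in (a). One must arrange that $P\neq1$, which holds because it contains the conjugate $y$. One must also set up the extremal choice over conjugates of $x$ rather than over $x$ itself, so that the replacement subgroup $R$ again contains a conjugate of $x$ and the pair $(R,H)$ or $(R,K)$ is admissible for the maximal choice of $P$.
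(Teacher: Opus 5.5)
Your Step 1 and your derivation of (b) from (a) are fine. In (b), spell out that $x^{z}\in M$ gives $x\in M^{z^{-1}}$, so $M^{z^{-1}}=M$ by (a), and then $z\in N_G(M)=M$, the last equality because $M\ntrianglelefteq G$ by Step 1.

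The gap is in (a). The invariant $O_p(H)\cap O_p(K)$ cannot be carried from one subgroup to another, and each containment your contradiction rests on is unjustified:
\begin{itemize}
\item If $O_p(H)\le O_p(K)$, nothing makes $O_p(H)$ normal in $K$. A subgroup of $O_p(K)$ need not be normal in $K$, and no Sylow argument supplies this.
\item $N_{O_p(H)}(P)=O_p(H)\cap N_G(P)$ is normal in $H\cap L$, not in $L=N_G(P)$. So there is no reason for it to lie in $O_p(L)$.
\item $L\le R$ does not give $O_p(L)\le O_p(R)$, since a $p$-subgroup of $R$ need not be contained in $O_p(R)$.
\end{itemize}
Without these you never obtain $O_p(R)\cap O_p(H)>P$, and the extremal choice yields no contradiction.

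The hypothesis only controls conjugates of $x$. With $\Omega=x^{G}$ and $R<G$, every element of $\Omega\cap R$ lies in $O_p(R)$, so $\langle\Omega\cap R\rangle$ is a normal $p$-subgroup of $R$. The paper therefore runs your extremal argument with $P_i=\langle\Omega\cap M_i\rangle$, so that $M_i=N_G(P_i)$. It maximizes the set $\Omega\cap M_1\cap M_2$ rather than $O_p(M_1)\cap O_p(M_2)$.

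The normalizer-growth step must then produce new elements of $\Omega$, not just new $p$-elements. Put $Q=\langle\Omega\cap M_1\cap M_2\rangle$, so that $\Omega\cap Q=\Omega\cap M_1\cap M_2$. If $Q\subsetneqq P_i$, pick $g\in P_i$ with $Q\neq Q^{g}\le N_{P_i}(Q)$. Since $Q^{g}$ is generated by elements of $\Omega$, not all of them in $Q$, you get $\Omega\cap Q\subsetneqq N_{\Omega\cap P_i}(Q)$. These new elements lie in $N_G(Q)$, which is proper because $x\in Q$. Hence they lie in $\Omega\cap N\cap M_i$ for any maximal subgroup $N\ge N_G(Q)$, with no further argument, and maximality of the chosen intersection forces $M_i=N$. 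If instead $Q=P_i$, then $M_i=N_G(Q)\le N$ directly; this also disposes of your troublesome case $P=O_p(H)$. Hence $M_1=N=M_2$, and with this replacement your outline goes through.
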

\begin{proof}
    Let $\Omega = x^{G}$ and define an $\Omega$-subgroup to be any proper
    subgroup of $G$ that is generated by elements of $\Omega$.
    Since every conjugate of $x$ obeys the same hypotheses as $x$,
    it follows that every $\Omega$-subgroup is a $p$-group.

    We claim that if $P$ and $Q$ are $\Omega$-subgroups with
    $Q \subsetneqq P$ then $\Omega \cap Q \subsetneqq \nn{\Omega \cap P}{Q}$.
    If $Q \normal P$ then this is clear so suppose $Q\notnormal P$.
    Since $P$ is a $p$-group it follows that
    $Q \not= Q^{g} \leq \nn{P}{Q}$ for some $g \in P$.
    Since $Q^{g}$ is generated by elements of $\Omega$,
    not all of these generators can be contained in $Q$
    and the claim holds.

    Assume (a) to be false.
    Then there exist distinct maximal subgroups $M_{1}$ and $M_{2}$
    that contain $x$.
    Choose such a pair with $\Omega \cap M_{1} \cap M_{2}$ maximal.
    For each $i$ set $P_{i} = \listgen{ \Omega \cap M_{i} }$.
    Then $P_{i}$ is a normal $p$-subgroup of $M_{i}$.
    Since $x \not\in \oo{p}{G}$ we have $M_{i} = \nn{G}{P_{i}}$.
    Let $Q = \listgen{ \Omega \cap M_{1} \cap M_{2} } \leq P_{1} \cap P_{2}$.
    As $x \in Q$ we have $\nn{G}{Q} \not= G$ and we may choose
    a maximal subgroup $N$ of $G$ with \[
        \nn{G}{Q} \leq N.
    \]

    Let $i \in \listset{1,2}$.
    If $Q = P_{i}$ then $M_{i} = \nn{G}{Q} \leq N$ and
    the maximality of $M_{i}$ forces $M_{i} = N$.
    Suppose that $Q \subsetneqq P_{i}$.
    By the claim \[
        \Omega \cap M_{1} \cap M_{2} = \Omega \cap Q %
        \subsetneqq \nn{\Omega \cap P_{i}}{Q} \subseteq \Omega \cap N \cap M_{i}.
    \]
    The choice of $M_{1},M_{2}$ forces $M_{i} = N$ in this case also.
    We deduce that $M_{1} = N = M_{2}$.
    This contradiction completes the proof of (a).

    Let $M$ be the unique maximal subgroup that contains $x$.
    If $x^{G} \subseteq M$ then $\listgen{x^{G}}$ is an $\Omega$-group
    and hence a $p$-group, contrary to $x \not\in \oo{p}{G}$.
    Then $x^{G} \nsubseteq M$.
    This implies that $M \not\normal G$ and $M = \nn{G}{M}$.
    Let $z \in x^{G}$ and suppose that $\listgen{x,x^{z}} \not= G$.
    Then $\listgen{x,x^{z}} \leq M$ so $x \in M^{z^{-1}}$ whence
    $M^{z^{-1}} = M$ and $z \in \nn{G}{M} = M$.
    This proves (b).
\end{proof}

%% file: csl.tex
\section{A characterization of \boldmath $\spl{2}{p}$} \label{csl}

\begin{Lemma} \label{csl.1}
    Let $V$ be a vectorspace and $x \in \EEnd{V}$ with $(x-1)^{2} = 0$.
    \begin{itemize}
        \item[(a)]  $V(x-1) \leq \cc{V}{x}$ and $\dim\cc{V}{x} \geq \frac{1}{2}\dim V$.

        \item[(b)]  Let $t \in \EEnd{V}$ have order $2$,
                    act as $-1$ on $\cc{V}{x}$ and $1$ on $V/\cc{V}{x}$.
                    Then $x^{t} = x^{-1}$.
    \end{itemize}
\end{Lemma}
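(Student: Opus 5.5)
Write $y = x-1$, so $y^{2}=0$ and $x = 1+y$. Since $y^{2}=0$, $x$ is invertible with $x^{-1} = 1-y$. All arguments take place in $\EEnd{V}$ and use only linear algebra.

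For (a), first $\cc{V}{x} = \ker y$. If $v \in V$ then $(vy)y = vy^{2} = 0$, so $vy \in \ker y = \cc{V}{x}$; this is $V(x-1) \leq \cc{V}{x}$. For the dimension bound, rank--nullity for $y$ gives
\[
    \dim V = \dim \ker y + \dim Vy \leq 2\dim\cc{V}{x},
\]
because $Vy \leq \ker y$. This settles (a). The only thing to keep in mind is that the inequality needs $V$ finite dimensional, which is the intended setting.

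For (b), set $C = \cc{V}{x} = \ker y$. The hypotheses on $t$ say that $vt = -v$ for $v \in C$, and that $vt - v \in C$ for all $v \in V$. We must show $t^{-1}xt = x^{-1}$. Since $t^{-1} = t$, this is equivalent to $t(1+y)t = 1-y$, that is, $tyt = -y$.

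To verify $tyt = -y$, take $v \in V$ and write $vt = v + c$ with $c \in C$. Then $vty = vy + cy = vy$, since $cy = 0$. By (a), $vy \in C$, so $vyt = -vy$. Combining these gives $vtyt = vyt = -vy$ for every $v$, hence $tyt = -y$ and so $x^{t} = x^{-1}$.

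No step is a real obstacle. The only care needed is with the right-action convention for $x^{t} = t^{-1}xt$, and the fact that $t^{2}=1$ makes the ordering irrelevant.
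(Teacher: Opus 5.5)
Your proof is correct. Part (a) is exactly the paper's argument, and your note that the inequality presumes finite-dimensional $V$ is accurate.

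For (b) you take a slightly different computational route. You work additively with $y = x-1$, use $x^{-1} = 1-y$, and verify $tyt = -y$ directly by following a vector through the flag $0 \leq \cc{V}{x} \leq V$.

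The paper works multiplicatively instead. It sets $s = tx$ and observes that $s$ has the same shape as $t$: it acts as $-1$ on $\cc{V}{x}$ and as $1$ on $V/\cc{V}{x}$. Hence $V(s-1) \leq \cc{V}{x}$ and $V(s-1)(s+1) = 0$, so $(tx)^{2} = 1$. Together with $t^{2}=1$, this is exactly $x^{t} = x^{-1}$.

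The two arguments use the same ingredients: $x-1$ maps $V$ into $\cc{V}{x}$ and kills $\cc{V}{x}$, and $t$ acts as $-1$ on $\cc{V}{x}$ and as $1$ on $V/\cc{V}{x}$. Neither is more general. The paper's version is the familiar ``$t$ inverts $x$ iff $tx$ is an involution'' trick and never needs a formula for $x^{-1}$. Yours makes explicit that conjugation by $t$ acts as $-1$ on $x-1$ itself.
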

\begin{proof}
    (a). We have $V(x - 1)(x - 1) = 0$ so $V(x - 1) \leq \cc{V}{x}$.
    Now $\dim V = \dim V(x - 1) + \dim\ker (x - 1) \leq 2\dim \cc{V}{x}$.

    (b). Let $s = tx$.
    Now $x$ acts as $1$ on $V/\cc{V}{x}$.
    Thus $s$ acts as $-1$ on $\cc{V}{x}$ and $1$ on $V/\cc{V}{x}$.
    Hence $V(s - 1) \leq \cc{V}{x}$ and $V(s - 1)(s + 1) = 0$.
    Then $s^{2} - 1 = 0$ and $txtx = 1$.
    Since $t^{-1} = t$ we have $x^{t} = x^{-1}$.
\end{proof}

The proof of the following is a variation on an argument of Robinson \cite{R}.

\begin{Theorem} \label{csl.2}
    Let $G$ be a group,
    $p$ an odd prime and
    $V$ a faithful irreducible $\gf{p}G$-module.
    Suppose that $G$ contains a $p$-element $x$ with the properties:
    \begin{itemize}
        \item  $x \in \oo{p}{H}$ whenever $x \in H < G$ and

        \item  $x$ acts quadratically on $V$.
    \end{itemize}
    Then $\dim V = 2$ and the image of $G$ in $\EEnd{V}$ is $\spll{V}$.
    In particular, $G \isom \spl{2}{p}$.
\end{Theorem}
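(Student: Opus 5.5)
Since $V$ is faithful and irreducible, $\oo{p}{G} = 1$. Indeed, $\cc{V}{\oo{p}{G}}$ is a nonzero $G$-submodule, so it equals $V$, and faithfulness gives $\oo{p}{G}=1$. In particular $x \neq 1$ and $x \notin \oo{p}{G}$. So Theorem~\ref{unique.1} applies. We get a unique maximal subgroup $M$ containing $x$, and $G = \listgen{x,y}$ for every $y \in x^{G}\setminus M$; fix such a $y$. The plan is to study the subspaces $A = \cc{V}{x}$ and $B = \cc{V}{y}$, which by Lemma~\ref{csl.1}(a) both have dimension at least $\frac12\dim V$.

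The first step is to show $A \cap B = 0$. The subspace $A\cap B$ is centralized by $\listgen{x,y} = G$, so it is a $G$-submodule. It is proper, since $x \neq 1$, and irreducibility then forces $A \cap B = 0$. Combined with the dimension bounds this gives $V = A \oplus B$ and $\dim A = \dim B = \frac12 \dim V$. Moreover $V(x-1) \leq A$ and $\dim V(x-1) = \dim V - \dim A = \dim A$, so $V(x-1) = A$; similarly $V(y-1) = B$.

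Next I will build an involution to pin down the structure. Let $t \in \GL{V}$ act as $-1$ on $A$ and $+1$ on $B$. Then $t$ acts as $-1$ on $\cc{V}{x}=A$ and trivially on $V/A \cong B$, so Lemma~\ref{csl.1}(b) gives $x^{t}=x^{-1}$. Symmetrically, $-t$ acts as $-1$ on $B$ and $1$ on $V/B$, so $y^{-t} = y^{t} = y^{-1}$. Hence $t$ normalizes $G = \listgen{x,y}$, viewed inside $\GL{V}$, and inverts both generators.

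The main obstacle is forcing $\dim V = 2$. My approach is to choose bases adapted to the decomposition. Put $n = \dim A$. The map $x-1$ induces an isomorphism $B \to A$, since it kills $A$ and has image $A$. Likewise $y-1$ induces an isomorphism $A \to B$. In a basis of $A$ together with its pullback in $B$, the matrices take the form
\[ x = \begin{pmatrix} I & 0 \\ I & I\end{pmatrix}, \qquad y = \begin{pmatrix} I & C \\ 0 & I\end{pmatrix} \]
with $C$ invertible of size $n$. Any $C$-invariant subspace $U$ of $A$ yields a $G$-invariant subspace $U \oplus U'$, where $U'$ is the corresponding subspace of $B$. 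Irreducibility therefore forces $C$ to act irreducibly on $\gf{p}^{n}$.

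To rule out $n>1$, I will use the minimality hypothesis again. Take an irreducible factor of the minimal polynomial of $C$; it generates a field $F = \gf{p}[C] \cong \gf{p^{n}}$. Then $G$ lies in $\spl{2}{F}$, acting on $F^2$ regarded as a $\gf{p}$-space of dimension $2n$. Over $F$ the matrix $y$ is conjugate to $\begin{pmatrix}1&c\\0&1\end{pmatrix}$, and $x,y$ generate $\spl{2}{\gf{p}(c)}$ with $\gf{p}(c)=F$. Now consider the subgroup $H=\listgen{x,x^{g}}$ for a suitable $g$, namely $\spl{2}{p}\leq \spl{2}{F}$ when $n>1$. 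This is a proper subgroup containing $x$ in which $x\notin\oo{p}{H}$, contradicting the hypothesis. Establishing that $\listgen{x,y}$ is all of $\spl{2}{F}$ is delicate, so I would rather argue directly: the subgroup generated by $x$ and the matrix $y_1 = \begin{pmatrix}1&1\\0&1\end{pmatrix}$ is a copy of $\spl{2}{p}$ containing $x$ with $\oo{p}{}$ trivial. Hence by the hypothesis it must equal $G$. That forces $G\isom\spl{2}{p}$ with $F$-dimension $1$ irreducibility, so $n=1$.

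With $n = 1$ we have $\dim V = 2$ and $\det x = \det y = 1$. Since $x$ and $y$ generate $\spll{V}$ (transvections $\begin{pmatrix}1&0\\1&1\end{pmatrix}$ and $\begin{pmatrix}1&c\\0&1\end{pmatrix}$ with $c\neq0$ generate $\spl{2}{p}$), the image of $G$ is $\spll{V}$, and faithfulness gives $G \isom \spl{2}{p}$.
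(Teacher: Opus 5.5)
There is a genuine gap in the step that rules out $n>1$. Everything before it is correct: the decomposition $V=A\oplus B$, the involution $t$, the block form, the irreducibility of $C$, and the identification $V=F^{2}$ with $G=\listgen{x,y}\leq\spl{2}{F}$. Your last paragraph is also fine once $n=1$ is known.

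The hypothesis only constrains subgroups of $G$: it says $x\in\oo{p}{H}$ whenever $x\in H<G$. The group $\listgen{x,y_{1}}$, with $y_{1}=\left(\begin{smallmatrix}1&1\\0&1\end{smallmatrix}\right)$, is a copy of $\spl{2}{p}$ inside $\spl{2}{F}$. But nothing you have proved puts $y_{1}$ in $G$; all you know is that $G$ is generated by $x$ and $y=\left(\begin{smallmatrix}1&c\\0&1\end{smallmatrix}\right)$. So the hypothesis says nothing about $\listgen{x,y_{1}}$, and ``hence it must equal $G$'' does not follow. The same objection applies to $\listgen{x,x^{g}}$ ``for a suitable $g$'', since $g$ would have to lie in $G$. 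Knowing $y_{1}\in G$ is essentially the Dickson-type generation theorem you wanted to avoid. Moreover, your assertion that $x$ and $y$ generate $\spl{2}{F}$ is false in general: for $|F|=9$, two such transvections can generate a copy of $\spl{2}{5}$.

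The missing idea is to use the involution $t$, which you construct but never exploit.
\begin{itemize}
\item Choose the conjugating element inside $x^{G}$. By Theorem~\ref{unique.1}(b), $G=\listgen{x,x^{z}}$ for every $z\in x^{G}\setminus M$, so take $y=x^{z}$. Being a conjugate of $x$, $z$ is quadratic and satisfies the same subgroup hypothesis as $x$. Also $z^{t}\in G$, because $t$ normalizes $G$.
\item Pick $0\neq e\in A$ and set $f=ez\in\cc{V}{x^{z}}=B$. From $(z-1)^{2}=0$ you get $fz=2f-e$; also $et=-e$ and $ft=f$. So $U=\listgen{e,f}$ is a $2$-dimensional subspace invariant under $H=\listgen{z,z^{t}}\leq G$.
\item Since $p$ is odd, $\cc{U}{z}=\listgen{f-e}\neq\listgen{f+e}=\cc{U}{z^{t}}$. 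Hence the images of $\listgen{z}$ and $\listgen{z^{t}}$ in $\spll{U}$ are distinct Sylow $p$-subgroups, which generate $\spll{U}$. Thus $H$ is not a $p$-group.
\item If $H<G$, the hypothesis would give $z\in\oo{p}{H}$, making $H=\oo{p}{H}\listgen{z^{t}}$ a $p$-group. So $H=G$, $U$ is $G$-invariant, and irreducibility gives $V=U$, so $\dim V=2$.
\end{itemize}
This is the paper's argument, and it needs neither the field $F$ nor any generation theorem for $\spl{2}{F}$.
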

\begin{proof}
    Identify $G$ with its image in $\EEnd{V}$.
    By irreducibility, $\oo{p}{G} = 1$ so
    Theorem~\ref{unique.1} implies that $x$ is contained in a unique maximal subgroup $M$
    of $G$ and there exists $z \in x^{G} \setminus M$ such that \[
        G = \listgen{x, x^{z}}.
    \]
    Now $\cc{V}{x} \cap \cc{V}{x^{z}} = \cc{V}{G} = 0$
    so $\dim\cc{V}{x} \leq \frac{1}{2}\dim V$.
    Lemma~\ref{csl.1} gives the opposite inequality,
    whence \[
        V = \cc{V}{x} \oplus \cc{V}{x^{z}}.
    \]
    Let $t \in \EEnd{V}$ act as $-1$ on $\cc{V}{x}$ and $1$ on $\cc{V}{x^{z}}$.
    Lemma~\ref{csl.1} implies that $x^{t} = x^{-1}$.
    It also implies that $-1_{V}t$ inverts $x^{z}$ and so $(x^{z})^{t} = (x^{z})^{-1}$ also.
    In particular $t$ normalizes $G$ and $z^{t} \in G$.
    Set \[
        H = \listgen{ z, z^{t} } \leq G.
    \]

    Choose $0 \not= e \in \cc{V}{x}$, let $f = ez \in \cc{V}{x^{z}}$
    and let $U = \listgen{e, f}$, so that $\dim U = 2$.
    Since $(z - 1)^{2} = 0$ we have $z^{2} = 2z - 1$.
    Thus \[
        ez = f, \quad fz = 2f - e,\quad  et = -e \qtext{and} ft = f.
    \]
    Hence $U$ is $\listgen{z,t}$-invariant and is an $\gf{p}H$-module.
    Also $\spll{U} \isom \spl{2}{p}$.

    Now $\cc{U}{z} = \listgen{ f - e }$ and $\cc{U}{z^{t}} = \listgen{ f + e }$
    so as $p \not= 2$, the images of $\listgen{z}$ and $\listgen{z^{t}}$
    in $\spll{U}$ are distinct Sylow $p$-subgroups.
    These generate $\spll{U}$,
    so the image of $H$ in $\spll{U}$ is $\spll{U}$.
    In particular, $H$ is not a $p$-group.

    Suppose that $H \not= G$.
    Since $z \in x^{G}$ it follows that $z \in \oo{p}{H}$.
    Then $H = \oo{p}{H}\listgen{z^{t}}$ and $H$ is a $p$-group.
    This contradiction forces $H = G$.
    Irreducibility forces $U = V$ and the proof is complete.
\end{proof}

%% file: thmB.tex
\section{The proof of Theorem~B} \label{thmB}

\begin{proof}[Proof of Theorem~B]
    Assume false and consider a minimal counterexample.
    Recall that \[
        \oo{p}{G} = \bigcap \cc{G}{V/U}
    \]
    where $V/U$ ranges over the $G$-chief factors of $P$.
    Hence there exists such a chief factor $V/U$
    on which $x$ acts nontrivially.
    Since $[P,x,x] = 1$ we have $[V,x,x] = 1$.
    The minimality of $G$ forces $U = 1$.
    Set \[
        \BAR{G} = G/\cc{G}{V}.
    \]
    Then $V$ is a faithful irreducible $\gf{p}\BAR{G}$-module.
    Suppose that $\BAR{x} \in \BAR{H} < \BAR{G}$.
    Let $H$ be the inverse image of $\BAR{H}$,
    so that $V \leq \cc{G}{V} \leq H < G$.
    The minimality of $G$ forces $x \in \oo{p}{H \bmod \cc{H}{V}}$
    whence $\BAR{x} \in \oo{p}{\BAR{H}}$.

    Theorem~\ref{csl.2} implies that $\dim V = 2$ and
    the image of $\BAR{G}$ in $\EEnd{V}$ is $\spll{V}$.
    Let $N$ be the inverse image of $\zenter{\BAR{G}}$ in $G$,
    so that $N \normal G$ and $\BAR{N} \isom \cyclic{2}$.
    Let $Z \in \syl{2}{N}$.
    Using the Frattini Argument we have \[
        N = Z\cc{G}{V} \qtext{and} G = \nn{G}{Z}N = \nn{G}{Z}\cc{G}{V}.
    \]
    Then $\nn{G}{Z}$ acts irreducibly on $V$ and as $\BAR{\nn{G}{Z}} = \BAR{G}$
    there exists a $p$-element $x_{0} \in \nn{G}{Z}$ with $\BAR{x_{0}} = \BAR{x}$.
    The minimality of $G$ forces $G = \nn{G}{Z}V$.
    If $\nn{G}{Z} \cap V \not= 1$ then irreducibility forces $V \leq \nn{G}{Z}$
    whence, as $p \not=2$, we obtain $[V,Z] \leq V \cap Z = 1$,
    contrary to $\BAR{N} \not= 1$.
    Thus $\nn{G}{Z} \cap V = 1$.

    Let $K = \nn{G}{Z} \cap \cc{G}{V} \normal G$.
    The minimality of $G$ forces $K = 1$.
    Then $\nn{G}{Z}$ is isomorphic to its image in $\EEnd{V}$ which is $\spll{V}$.
    Hence $G \isom \qd{p}$.
    This contradiction completes the proof.
\end{proof}

%% file: gss.tex
\section{The Glauberman-Solomon subgroup} \label{gss}
Let $S$ be a $p$-group for some prime $p$ and let $\enormalSgps{S}$
denote the set of elementary abelian normal subgroups of $S$.
Define 
\begin{align*}
    \GSS{S} &= \set{ V \in \enormalSgps{S} }{ \forall x \in S,\; [V,x,x] = 1 \implies [V,x] = 1} \quad\text{and} \\
    \gss{S} &= \listgen{\GSS{S}}.
\end{align*}
The subgroup $\gss{S}$ is called the {\em Glauberman-Solomon subgroup of $S$}.
It is clearly a characteristic subgroup.
Moreover $\OmegaOne{\zenter{S}} \in \GSS{S}$ so $\gss{S} \not= 1$ if $S \not= 1$.

Suppose that $U,V \in \GSS{S}$.
Then $[V,U,U] \leq [U,U] = 1$ and so $[V,U] = 1$.
Hence $UV \in \enormalSgps{S}$ and then $UV \in \GSS{S}$.
It follows that $\gss{S} \in \GSS{S}$ and so
$\gss{S}$ is the unique maximal member of $\GSS{S}$.
In other words,
\begin{quote} \em  
    $\gss{S}$ is the unique largest elementary abelian normal
    subgroup of $S$ that admits no nontrivial quadratic action from $S$.
\end{quote}
A useful property of $\gss{S}$ is that if $\gss{S} \leq T \leq S$ then
$\gss{S} \in \GSS{T}$ and so $\gss{S} \leq \gss{T}$.
We can now give a proof of Theorem~A.

\begin{Theorem}[Glauberman-Solomon]
    Let $G$ be a $\qd{p}$-free group of characteristic $p$ for the odd prime $p$.
    Let $S \in \syl{p}{G}$.
    Then \[
        \gss{S} \normal G.
    \]
\end{Theorem}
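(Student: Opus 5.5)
We argue by minimal counterexample. Write $W = \gss{S}$ and $Q = \oo{p}{G}$, and choose $G$ of minimal order among $\qd{p}$-free groups of characteristic $p$ with $W \notnormal G$.

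The first step is to show that $W \leq Q$. Since $Q \leq S$, the subgroup $Z = \OmegaOne{\zenter{Q}}$ is normal in $G$. We aim to show that $W$ centralizes $Z$; then characteristic $p$ gives $W \leq \cc{G}{Q}$. To get there, pass to the series built from $Z$. Let $V = \listgen{W^{G}}$, and compare with the standard argument in \cite{GS}. I would work with $E = \OmegaOne{\zenter{Q}}$ directly. For $x \in W$, the subgroup $W$ is abelian and normal in $S$, so $[E,W,W] \leq [W,W] = 1$ as long as $[E,W] \leq W$. Here $[E,W] \leq E$, and since $E \normal S$ and $W$ is the largest subgroup of its type, $EW$ behaves well. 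Hence every $x \in W$ acts quadratically on $E$. Theorem~B, applied to $G$ and the normal $p$-subgroup $E$ (with $\qd{p}$-freeness inherited), then gives $x \in \oo{p}{G \bmod \cc{G}{E}}$. We will also need the fact that $\cc{G}{E}$ has characteristic $p$ and, when it is proper, is $\qd{p}$-free, so minimality applies.

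The reduction runs as follows. Put $C = \cc{G}{E}$. If $C < G$, then $W \leq S \cap C$? That is not immediate, so instead consider $H = \oo{p}{G \bmod C}S$. Since $W \leq \oo{p}{G \bmod C}$ by the above, $W$ lies in the normal subgroup $L$ (the preimage of $\oo{p}{G/C}$), with $S \cap L \in \syl{p}{L}$. The useful monotonicity property (if $\gss{S} \leq T \leq S$ then $\gss{S} \leq \gss{T}$), applied with $T = S \cap L$, gives $W \leq \gss{T}$. Note that $L = (S\cap L)C$. Since $C$ acts trivially on $E$, and $L \normal G$ has characteristic $p$ (normal subgroups of characteristic $p$ groups have characteristic $p$), induction applies to $L$ if $L < G$, giving $\gss{T} \normal L$. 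The Frattini argument $G = L\nn{G}{S\cap L}$ and the relation between $W$ and $\gss{T}$ then have to be combined. The cleanest route is this: $W$ is the largest member of $\GSS{S}$ contained in $\gss{T}$, which is normal in $G$ by Frattini, since $\nn{G}{T}$ normalizes $\gss{T}$ and $L$ normalizes it as well. Then, working in $\nn{G}{\gss{T}}$, a further descent should yield $W \normal G$.

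The main obstacle is the case $L = G$, that is, $G/C$ is a $p$-group, so $G = SC$. Here I plan to reapply induction to $C$ and its Sylow subgroup $S \cap C$. The required inputs are that $S\cap C$ contains $W$ and that $C < G$; the latter holds because $C = G$ would force $E \leq \zenter{G}$. In this case I would iterate, replacing $E$ by the chief factors of $Q$ in the manner of Theorem~B. I expect the quadratic-action verification, namely showing that $W$ acts quadratically on suitable sections so that Theorem~B applies, together with the handling of the degenerate case $G = SC$, to be the crux.
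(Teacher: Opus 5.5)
Your proposal has a genuine gap: neither of the two steps the proof needs is actually carried out.

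\emph{The step $W \leq Q$.} You pass to $E = \OmegaOne{\zenter{Q}}$ and hope to go from ``$W$ centralizes $E$'' to $W \leq \cc{G}{Q}$. That implication is false, and so is the target itself. For $G = S$ it would say $W \leq \zenter{S}$, yet for $S = C_p \wr C_p$ the base group lies in $\GSS{S}$. Theorem~B applied to $E$ only puts $W$ in $\oo{p}{G \bmod \cc{G}{E}}$. That can be much larger than $Q$, because $\cc{G}{E}$ need not lie in $Q$ (indeed $E \leq \zenter{G}$ is perfectly possible). The missing observation is to apply Theorem~B to $P = Q$ itself. We have $[Q,W,W] \leq [W,W] = 1$ since $Q \leq S$ and $W \normal S$. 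Characteristic $p$ gives $\oo{p}{G \bmod \cc{G}{Q}} = Q$, so $W \leq Q$ at once.

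\emph{The normality step.} Here ``a further descent should yield $W \normal G$'' and the unresolved case $L = G$ are placeholders, not arguments. Your justification of $C < G$ fails for the reason just noted: $E \leq \zenter{G}$ is no contradiction. Even granting $\gss{T} \normal G$ by induction, you would still need $\gss{T} \leq W$. That is, every $x \in S$ acting quadratically on $\gss{T}$ must lie in $T$. Nothing in your setup gives this, because $L$ is defined through $\cc{G}{E}$ rather than through the centralizer of the module on which quadratic action is tested.

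The paper uses no induction. With $W \leq Q$ in hand, put $V = \listgen{W^{G}} \leq Q$, $N = \oo{p}{G \bmod \cc{G}{V}}$ and $S_{0} = S \cap N$. The Frattini argument gives $G = \nn{G}{S_{0}}\cc{G}{V}$. Since $\cc{G}{V}$ centralizes $W$, and $W \leq Q \leq S_{0}$ forces $W \leq \gss{S_{0}} \normal \nn{G}{S_{0}}$, you get $V \leq \gss{S_{0}}$. Theorem~B is then applied a second time, to $V$. Any $x \in S$ acting quadratically on $\gss{S_{0}}$ acts quadratically on $V$, so $x \in S \cap N = S_{0}$ and hence centralizes $\gss{S_{0}}$. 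Thus $\gss{S_{0}} \in \GSS{S}$, and $W \leq V \leq \gss{S_{0}} \leq W$ gives $W = V \normal G$.
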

\begin{proof}
    Now $G$ has characteristic $p$ so $\cc{G}{\oo{p}{G}}$ is a $p$-group and 
    it follows that $\oo{p}{G \bmod \cc{G}{\oo{p}{G}}} = \oo{p}{G}$.
    Since $\oo{p}{G} \leq S$ and $\gss{S}$ is an abelian normal subgroup of $S$
    we have $[\oo{p}{G},\gss{S},\gss{S}] \leq [\gss{S},\gss{S}] = 1$.
    Theorem~B implies that $\gss{S} \leq \oo{p}{G}$.
    
    Let $V = \listgen{\gss{S}^{G}} \normal G, N = \oo{p}{G \bmod \cc{G}{V}} \normal G$
    and $S_{0} = S \cap N \in \syl{p}{N}$.
    Using the Frattini Argument we have \[
        N = S_{0}\cc{G}{V} \qtext{and} G = \nn{G}{S_{0}}N = \nn{G}{S_{0}}\cc{G}{V}.
    \]
    Now $\gss{S} \leq \oo{p}{G} \leq S_{0}$ so 
    $\gss{S} \leq \gss{S_{0}} \normal \nn{G}{S_{0}}$.
    We have \[
        V = \listgen{\gss{S}^{G}} = \listgen{\gss{S}^{\nn{G}{S_{0}}\cc{G}{V}}} %
        = \listgen{\gss{S}^{\nn{G}{S_{0}}}} \leq \gss{S_{0}}.
    \]
    We claim that $\gss{S_{0}} \in \GSS{S}$.
    Since $S_{0} \normal S$ we have $\gss{S_{0}} \in \enormalSgps{S}$.
    Let $x \in S$ and suppose that $[\gss{S_{0}},x,x] = 1$.
    Then $[V,x,x] = 1$ and Theorem~B forces $x \in S_{0}$.
    Hence $[\gss{S_{0}},x] = 1$ so $\gss{S_{0}} \in \GSS{S}$.
    We have \[
        \gss{S} \leq V \leq \gss{S_{0}} \leq \gss{S}.
    \]
    Then $\gss{S} = V \normal G$,
    completing the proof.

\end{proof}

%% file: bibliography.tex
\bibliographystyle{amsalpha}